\newtheorem{theorem}{Theorem}[section]
\newtheorem{proposition}[theorem]{Proposition}
\newtheorem{lemma}[theorem]{Lemma}
\theoremstyle{remark}
\newtheorem{remark}{Remark}[section]
\newcommand{\df}{\mathrm{d}}
\newcommand{\im}{\mathrm{i}}
\newcommand{\Q}{\mathds{Q}}
\begin{document}

\title[On the existence of orthogonal polynomials]{On the existence of orthogonal polynomials for oscillatory weights on a bounded interval}
\author[H. Majidian]{Hassan Majidian}
\address{Department of Basic Sciences, Iranian Institute for Encyclopedia Research, PO Box 14655-478, Tehran, Iran.}
\email{majidian@iecf.ir}
%\cortext[cor1]{Corresponding author}
%\fntext[tel]{Tel.: +98 21 22129843}
\maketitle
\begin{abstract}
It is shown that the orthogonal polynomials, corresponding to the oscillatory weight $e^{\im\omega x}$, exists if $\omega$ is a transcendental number and $\tan\omega/\omega\in\Q$. Also, it is proved that such orthogonal polynomials exist for almost every $\omega>0$, and the roots are all simple if $\omega>0$ is either small enough or large enough.
\end{abstract}

\emph{Keyword.}
orthogonal polynomial; oscillatory weight; Gaussian quadrature rule

MSC 65D32; 65R10

\section{Introduction}
We consider the problem of existence of orthogonal polynomials and Gaussian quadrature rules (in the standard form) for the following inner product:
\begin{equation}\label{eq:inner}
(f,g)_{\omega}=\int_{-1}^1 f(x)g(x)e^{\im\omega x}\,\df x,
\end{equation}
with $\omega>0$. More precisely, we seek a monic polynomial $p^{\omega}_n$ of a given degree $n$ such that
\begin{equation}
\int_{-1}^1 p^{\omega}_n(x)x^j e^{\im\omega x}\,\df x=0,\qquad j=0,1,\ldots,n-1.
\end{equation}

The following results on the existence of $p^{\omega}_n$ are due to~\cite{ash12}:
\begin{enumerate}
  \item[]
  Proposition 1: $p^{\omega}_1$ exists for any $\omega$ except when $\omega$ is a multiple of $\pi$;
  \item[]
  Proposition 2: $p^{\omega}_2$ exists for all $\omega$;
  \item[]
  Conjecture~1: $p^{\omega}_n$  with $n$ even exists for all $\omega$;
  \item[]
  Conjecture~2: $p^{\omega}_n$  with $n$ odd does not exists for some $\omega$.
\end{enumerate}

In this paper, we give a sufficient condition on $\omega$ for which $p^{\omega}_n$ exists for all $n$. According to Conjecture~1, this condition is not necessary. We show that $p^{\omega}_n$ exists for almost every $\omega>0$. If the existence of $p^{\omega}_n$ is assumed, it is shown that all of its roots are simple when $\omega>0$ is either small enough or large enough.

Throughout the paper, we frequently suppress the dependence of objects on $\omega$ for simplification in notations.
\section{Orthogonal polynomials}
A necessary and sufficient condition for existence of the orthogonal polynomial $p^{\omega}_n$ is that the Hankel determinant
\begin{equation}
\Delta_n=\left|
           \begin{array}{cccc}
             \mu_0 & \mu_1 & \cdots & \mu_{n-1} \\
             \mu_1 & \mu_2 & \cdots & \mu_n \\
             \vdots & \vdots & \cdots & \vdots \\
             \mu_{n-1} & \mu_n & \cdots & \mu_{2n-2} \\
           \end{array}
         \right|
\end{equation}
does not vanish. The moment $\mu_k:=\int_{-1}^1 x^k e^{\im\omega x}\,\df x$ is defined recursively (see~\cite{ash12}):
\begin{subequations}\label{eq:muk}
\begin{align}
\mu_0&=\frac{2\sin\omega}{\omega},\\[1ex]
\mu_k&=\frac{1}{\im\omega}\left(e^{\im\omega}-(-1)^k e^{-\im\omega}\right)-\frac{k}{\im\omega}\mu_{k-1},\quad k\ge1.
\end{align}
\end{subequations}

It is easy to show that
\begin{equation}\label{eq:mu}
\mu_k=\frac{(-1)^k k!}{(\im\omega)^k}\sum_{\nu=0}^k \frac{(-\im\omega)^{\nu}s_{\nu}}{\nu!},
\end{equation}
where
$$
s_{\nu}:=\frac{1}{\im\omega}\left(e^{\im\omega}-(-1)^{\nu} e^{-\im\omega}\right)=\left\{
\begin{array}{ll}
\dfrac{2\sin\omega}{\omega},&\mbox{for $\nu$ even},\\[1em]
\dfrac{2\cos\omega}{\im\omega},&\mbox{for $\nu$ odd}.
\end{array}\right.
$$
Then we can expand~(\ref{eq:mu}) into
\begin{equation}\label{eq:moment}
\mu_k=\frac{2(-1)^{k+1} k!}{(\im\omega)^k}\left(\cos\omega\sum_{\nu=1\atop \nu\mbox{ \tiny odd}}^k \frac{(-\im\omega)^{\nu-1}}{\nu!}-
\frac{\sin\omega}{\omega}\left(1+\sum_{\nu=2\atop \nu\mbox{ \tiny even}}^k \frac{(-\im\omega)^{\nu}}{\nu!}\right)\right).
\end{equation}

Now consider the matrix corresponding to the Hankel determinant $\Delta_n$. If we take from the $r$th row the factor $\left(\frac{-1}{\im\omega}\right)^{r-1}$, and from the $s$th column the factor $\left(\frac{-1}{\im\omega}\right)^{s-1}$, then we arrive at a new Hankel determinant $\widetilde{\Delta}_n$ with the moments
\begin{equation}\label{eq:tmoment}
\widetilde{\mu}_k:=-2k!\left(\cos\omega\sum_{\nu=1\atop \nu\mbox{ \tiny odd}}^k \frac{(-\im\omega)^{\nu-1}}{\nu!}-
\frac{\sin\omega}{\omega}\left(1+\sum_{\nu=2\atop \nu\mbox{ \tiny even}}^k \frac{(-\im\omega)^{\nu}}{\nu!}\right)\right).
\end{equation}

The relation between $\Delta_n$ and $\widetilde{\Delta}_n$ is then
$$
\Delta_n=\left(\frac{1}{\im\omega}\right)^{n(n-1)}\widetilde{\Delta}_n.
$$
Thus, $\widetilde{\Delta}_n\ne0$ if and only if $\Delta_n\ne0$. If $\omega$ is such that each $\widetilde{\mu}_k$ is a polynomial in $\im\omega$ with rational coefficients, then $\widetilde{\Delta}_n$ is a polynomial in $\im\omega$ with rational coefficients. As the proof of Theorem 2.3 in~\cite{mil05}, we employ the fact that transcendental numbers can not be zeros of a polynomial with rational coefficients. Then we seek a set $S$ of transcendental $\omega$, for which $\widetilde{\mu}_k$ is a polynomial in $\im\omega$ with rational coefficients. Clearly, any multiplier of $\pi$ falls in $S$.

If $\omega\in S$, then $\cos\omega\ne0$. Then the moments can be rewritten as
\begin{equation}
\mu_k=\frac{2(-1)^{k+1} k!\cos\omega}{(\im\omega)^k}\left(\sum_{\nu=1\atop \nu\mbox{ \tiny odd}}^k \frac{(-\im\omega)^{\nu-1}}{\nu!}-
\frac{\tan\omega}{\omega}\left(1+\sum_{\nu=2\atop \nu\mbox{ \tiny even}}^k \frac{(-\im\omega)^{\nu}}{\nu!}\right)\right).
\end{equation}
Again using the above idea, it is enough to determine $\omega>0$ not belonging to $\Q$ (the field of rational numbers) for which
\begin{equation}
\widehat{\mu}_k:=-2k!\left(\sum_{\nu=1\atop \nu\mbox{ \tiny odd}}^k \frac{(-\im\omega)^{\nu-1}}{\nu!}-
\frac{\tan\omega}{\omega}\left(1+\sum_{\nu=2\atop \nu\mbox{ \tiny even}}^k \frac{(-\im\omega)^{\nu}}{\nu!}\right)\right)
\end{equation}
is a polynomial in $\im\omega$. Thus, the problem is to find transcendental numbers $\omega>0$ not belonging to $\{m\pi : m=1,2,\ldots\}$, such that $\tan\omega/\omega\in\Q$.

%In 1761,  Johann Heinrich Lambert in an attempt to show $\pi$ is irrational, proved that for any rational $r\ne0$, $\tan r$ is irrational~(cf.~\cite{lam98}). Thus, $\tan\omega/\omega\in\Q$ implies that $\omega\notin\Q$.

Transcendental numbers can be zeros of a polynomial with rational coefficients if and only if the polynomial is identically zero. Thus it is enough to show that $\widetilde{\Delta}_n$, $\widehat{\Delta}_n$, as functions of $\im\omega$, are not identically zero for $n>1$. This can be shown by a discussion similar to  that carried out in the proof of Theorem 2.3 in~\cite{mil05}.
%\begin{figure}
%\centerline{\includegraphics[width=0.4\textwidth]{tan.eps}}
%\caption{\small The graph of $\frac{\tan\omega}{\omega}$, $0<\omega<5\pi/2$. \label{fig:tan}}
%\end{figure}
Thus we have the following result.
\begin{proposition}\label{prop:main}
For any transcendental $\omega>0$ with $\tan\omega/\omega\in\Q$, the orthogonal polynomial $p^{\omega}_n$ exists.
\end{proposition}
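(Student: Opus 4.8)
The plan is to reduce the non-vanishing of the Hankel determinant $\Delta_n$ to a purely algebraic statement about a single rational polynomial, and then to let the transcendence of $\omega$ do the work. Since $p^{\omega}_n$ exists if and only if $\Delta_n\ne0$, and since the two scalings carried out above give $\Delta_n=(\im\omega)^{-n(n-1)}\widetilde{\Delta}_n$ together with $\widetilde{\mu}_k=\cos\omega\,\widehat{\mu}_k$ (so that $\widetilde{\Delta}_n=(\cos\omega)^n\widehat{\Delta}_n$), it suffices to prove $\widehat{\Delta}_n\ne0$. Here I use that $\cos\omega\ne0$: the hypothesis $\tan\omega/\omega\in\Q$ presupposes that $\tan\omega$ is finite, so $\omega$ is not an odd multiple of $\pi/2$. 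The case $n=1$ is Proposition~1 (existence for $\omega$ off the integer multiples of $\pi$), so I assume $n>1$ from now on.

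Next I would fix the rational number $c:=\tan\omega/\omega$ and read off from the defining formula for $\widehat{\mu}_k$ (which has the same $\im\omega$-dependence as~\eqref{eq:tmoment}) that, with $c$ frozen, each $\widehat{\mu}_k$ is a polynomial in $\im\omega$ with rational coefficients; moreover only even powers of $\im\omega$ occur, so $\widehat{\mu}_k\in\Q[(\im\omega)^2]$. Consequently $\widehat{\Delta}_n$, being a polynomial in the entries $\widehat{\mu}_{i+j}$, equals $Q_n(-\omega^2)$ for a single $Q_n\in\Q[w]$, where $w=(\im\omega)^2=-\omega^2$. Because $\omega$ is transcendental, so is $-\omega^2$, and a transcendental number cannot be a root of a nonzero polynomial with rational coefficients. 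Hence the whole problem collapses to the assertion
\[
Q_n\not\equiv0\qquad(n>1),
\]
that is, that $\widehat{\Delta}_n$, viewed as a polynomial in $w$, is not identically zero.

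To attack $Q_n\not\equiv0$ I would run a degree analysis of the Hankel determinant through its Leibniz expansion. One reads off that $\deg_w\widehat{\mu}_k=\lfloor k/2\rfloor$, with leading coefficient $2c$ when $k$ is even and $2(ck-1)$ when $k$ is odd. Summing degrees along a permutation $\sigma$ shows the candidate top degree is $\tfrac12 n(n-1)$, attained only by the parity-preserving permutations; but their total contribution carries the factor $(2c)^n\sum_\sigma\operatorname{sgn}\sigma$, and for $n\ge3$ this signed sum vanishes, since any parity class of $\{0,\dots,n-1\}$ of size $\ge2$ forces a factor $\sum_{S_m}\operatorname{sgn}=0$. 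Thus the naive leading term cancels and one must descend to the genuine leading coefficient of $Q_n$; this careful accounting is precisely the step modelled on the proof of Theorem~2.3 in~\cite{mil05}.

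I expect this last step to be the main obstacle. The difficulty is not that $Q_n$ is \emph{generically} nonzero — that is easy — but that we need $Q_n\not\equiv0$ for \emph{every} admissible value of $c$, since $c$ is pinned down by $\omega$ and lies outside our control. The coefficients of $Q_n$ are themselves polynomials in $c$, and different coefficients degenerate at different special values: already for $n=2$ one computes $\widehat{\Delta}_2=4c^2w+4(c^2-1)$, whose top coefficient vanishes at $c=0$ and whose constant term vanishes at $c=1$, yet the two never vanish together. The crux is therefore to show that the coefficient-polynomials of $Q_n$ in $c$ have no common zero, equivalently that for each $c$ at least one coefficient survives; producing such a surviving coefficient for every $c$ and every $n$ is where the real work of the proof lies.
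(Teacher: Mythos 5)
Your reduction is exactly the paper's, step for step: the row/column scaling giving $\Delta_n=(\im\omega)^{-n(n-1)}\widetilde{\Delta}_n$, the factorization $\widetilde{\mu}_k=\cos\omega\,\widehat{\mu}_k$ (hence $\widetilde{\Delta}_n=(\cos\omega)^n\widehat{\Delta}_n$, with $\cos\omega\ne0$ justified as you do), the observation that once $c:=\tan\omega/\omega\in\Q$ is frozen each $\widehat{\mu}_k$ is a polynomial in $\im\omega$ with rational coefficients, and the final appeal to the fact that a transcendental number is never a root of a nonzero rational polynomial. Your degree bookkeeping is also correct: with $w:=(\im\omega)^2$, one has $\deg_w\widehat{\mu}_k=\lfloor k/2\rfloor$ with leading coefficient $2c$ for $k$ even and $2(ck-1)$ for $k$ odd, the candidate top degree $n(n-1)/2$ is carried only by parity-preserving permutations, their signed sum vanishes for $n\ge3$, and your $\widehat{\Delta}_2=4c^2w+4(c^2-1)$ checks out. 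This is more detail than the paper itself records.

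The problem is that you stop at the decisive point: the claim $Q_n\not\equiv0$ for \emph{every} admissible rational $c$ is never proved, only advertised as where the real work lies; without it the argument proves nothing, so as a proof the proposal is incomplete. In fairness, the paper leaves essentially the same hole: it asserts that the non-vanishing of $\widehat{\Delta}_n$ for $n>1$ ``can be shown by a discussion similar to that carried out in the proof of Theorem 2.3 in~\cite{mil05}'', with no details and no acknowledgement of the $c$-dependence that you correctly single out as the difficulty. Two remarks confirm your worry is not cosmetic. First, the cheap exits really are blocked: besides the cancellation of the naive leading term for $n\ge3$, evaluating at $w=0$ (the analogue of the $\Delta_n(0)\ne0$ argument the paper uses for its almost-every-$\omega$ theorem) also fails, because the constant term of $Q_n$ vanishes at $c=1$ for every $n\ge2$ --- consistent with the fact that $\widehat{\Delta}_n=(\im\omega)^{n(n-1)}(\cos\omega)^{-n}\Delta_n\to0$ in the Legendre limit $\omega\to0$, where $(c,w)\to(1,0)$. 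Second, the claim is genuinely false for $n=1$: $\widehat{\Delta}_1=2c$ is identically zero when $c=0$, and indeed $\omega=m\pi$ is transcendental with $\tan\omega/\omega=0\in\Q$ while $p^{\omega}_1$ does not exist (Proposition 1 of~\cite{ash12}). So your dispatch of the case $n=1$ via Proposition 1 cannot cover all $\omega$ allowed by the hypothesis --- no argument could --- and the paper's silent restriction of the non-vanishing claim to $n>1$ is forced, not optional.
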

\begin{remark}
The converse is not necessarily true. There are examples of $\omega$ with $\tan\omega/\omega\not\in\Q$ while $\Delta_n\ne0$, i.e., the orthogonal polynomial $p^{\omega}_n$ exists. For example, $p^{\omega}_2$ exists for any $\omega>0$~\cite{ash12}.
\end{remark}

The set $S$ determined in Proposition~\ref{prop:main} is at most countable due to countability of $\Q$. However, our numerical experiences show that $p^{\omega}_n$ exists for almost every $\omega>0$. In the following, we establish this result.
\begin{theorem}
$p^{\omega}_n$ exists for almost every $\omega>0$.
\end{theorem}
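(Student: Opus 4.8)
The plan is to exhibit $\Delta_n$ as a non-trivial analytic function of $\omega$ and then to invoke the fact that the zero set of such a function is Lebesgue-negligible. Recall that $p^{\omega}_n$ exists precisely when $\Delta_n(\omega)\ne0$, so it suffices to prove that the exceptional set $E_n:=\{\omega>0 : \Delta_n(\omega)=0\}$ has measure zero.

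First I would establish that $\Delta_n$ is an entire function of $\omega$. Each moment $\mu_k(\omega)=\int_{-1}^1 x^k e^{\im\omega x}\,\df x$ is the integral over the compact interval $[-1,1]$ of an integrand that is entire in $\omega$ for every fixed $x$ and jointly continuous in $(x,\omega)$; differentiating under the integral sign (or applying Morera's theorem together with Fubini) shows that $\mu_k$ extends to an entire function of the complex variable $\omega\in\C$. Since $\Delta_n$ is a fixed polynomial, namely the determinant, in the entries $\mu_0,\ldots,\mu_{2n-2}$, it is itself entire in $\omega$.

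Next I would check that $\Delta_n\not\equiv0$ by evaluating at a single convenient point. At $\omega=0$ the weight $e^{\im\omega x}$ reduces to $1$ and $\mu_k(0)=\int_{-1}^1 x^k\,\df x$, so the Hankel matrix becomes the Gram matrix of $1,x,\ldots,x^{n-1}$ with respect to the ordinary inner product on $L^2(-1,1)$. These monomials are linearly independent, hence the Gram matrix is positive definite and $\Delta_n(0)>0$. Therefore $\Delta_n$ is not the zero function. (Alternatively, any $\omega$ furnished by Proposition~\ref{prop:main} is a point of non-vanishing of $\Delta_n$.)

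Finally, the conclusion follows from the identity theorem: a non-trivial entire function has only isolated zeros in $\C$, and these cannot accumulate at any finite point, so the trace $E_n$ of the zero set on the positive real axis is discrete, hence at most countable and of measure zero. Consequently $p^{\omega}_n$ exists for every $\omega>0$ outside $E_n$, that is, for almost every $\omega>0$. The only genuine work lies in the two preliminary steps, namely justifying analyticity rigorously and pinning down one point of non-vanishing; once these are in place the measure-zero statement is automatic, so I do not anticipate a serious obstacle, the delicate point being merely the careful verification that $\Delta_n$ is truly analytic rather than merely smooth.
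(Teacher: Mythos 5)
Your proof is correct, and its overall skeleton is exactly the paper's: show $\Delta_n$ is analytic in $\omega$, show it is not identically zero by checking $\Delta_n(0)\ne0$, and conclude via isolation of zeros that the exceptional set is negligible. The differences are in how you discharge the two sub-steps, and in both cases your route is the more self-contained one. For analyticity, the paper argues by induction on $k$ using the recursion (\ref{eq:muk}); since that recursion carries factors $1/(\im\omega)$, analyticity at and near $\omega=0$ requires a removable-singularity argument, and the paper's domain $D$ (a neighborhood of the open semi-axis) does not obviously contain $0$ even though the proof then evaluates $\Delta_n$ at $0$. Your derivation of entireness directly from the integral representation $\mu_k(\omega)=\int_{-1}^1 x^k e^{\im\omega x}\,\df x$ (Morera/Fubini or differentiation under the integral) sidesteps this entirely and makes the evaluation at $\omega=0$ unproblematic. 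For the non-vanishing, the paper defers $\Delta_n(0)\ne0$ to the proof of Theorem 2.3 in Milovanovi\'c's work, whereas you prove it outright: at $\omega=0$ the Hankel matrix is the Gram matrix of $1,x,\ldots,x^{n-1}$ in $L^2(-1,1)$, hence positive definite. So while the strategy coincides, your write-up fills the two gaps the paper leaves to references or leaves slightly imprecise, and would serve as a cleaner replacement for the published argument.
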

\begin{proof}
By induction on the index $k$, we can show from~(\ref{eq:muk}) that the moments $\mu_k$, as functions of $\omega$, are analytic in $D$, an arbitrary connected neighborhood of the semi-axis $\omega>0$. The same result holds then for the Hankel determinant $\Delta_n=\Delta_n(\omega)$. Since zeros of any analytic function (if it is not identically zero) are isolated, it is enough to show that $\Delta_n(\omega)$ is not identically zero in $D$. Since $\Delta_n$ is analytic and then continuous, it is enough to show that $\Delta_n(0)\ne0$; and his can be done similar to the proof of Theorem 2.3 in~\cite{mil05}.
\end{proof}
\section{Gaussian quadrature rules}
Since the weight function in~(\ref{eq:inner}) is not positive, we can not readily claim that the roots of $p^{\omega}_n$ (if exists) are all simple. If $p^{\omega}_n$ have some multiple zeros, then the $n$-point Gaussian quadrature rule can be written in the following form:
$$
G_n(g)=\sum_{\nu=1}^n\sum_{k=0}^{m_{\nu}-1}w_{\nu,k} f^{(k)}\left(x_{\nu}\right),
$$
where $m_{\nu}$ is the multiplicity of the node $x_{\nu}$, and the weights $w_{\nu,k}$ are such that the rule is exact if $f$ is replaced by a polynomial of degree at most $2n-1$. Here in the notations, we suppressed the dependence of the nodes and the weights on $n$. This rule, however, is rarely of practical interest since determining the multiplicities of the nodes is not an easy task. Our numerical experiences show that the roots of $p^{\omega}_n$ (if exists) are all simple.

This result can be established if we assume the existence of $p^{\omega}_n$ for all $\omega>0$. According to Conjecture~2, this result most probably hlods for $n$ even. From our numerical experiences, the same result can be drawn too. We have computed the absolute values of the Hankel determinant for $n=2,4,6$; for each $n$, the graph has been drawn for some increasing $\omega$ (see Figure~\ref{fig:hdeven}). As it is seen, the graphs never cut the horizontal axis, i.e., the Hankel determinants never vanish.
\begin{figure}
\centerline{\fbox{\includegraphics[width=0.3\textwidth,height=12ex]{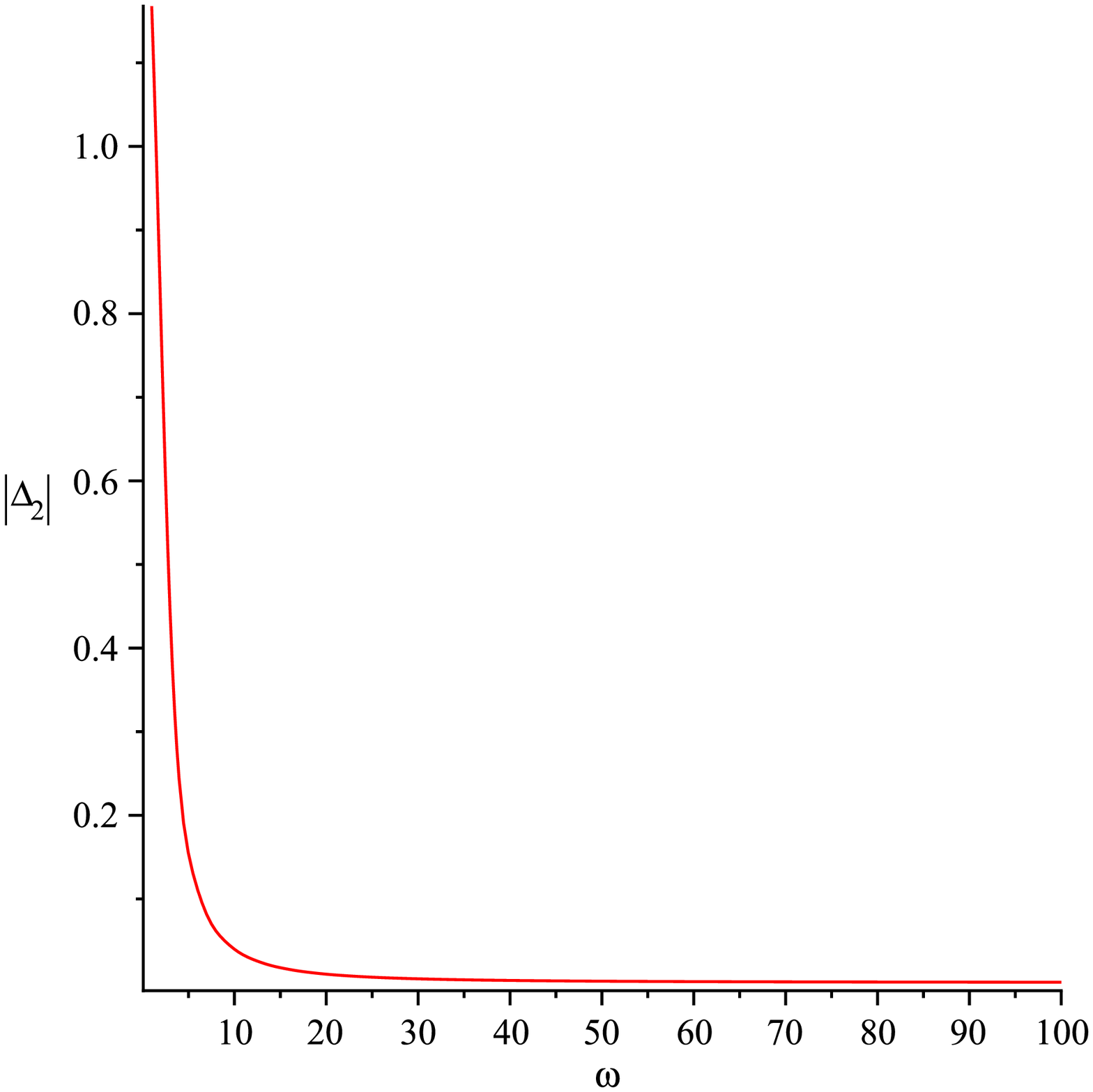}}\quad\fbox{\includegraphics[width=0.3\textwidth,height=12ex]{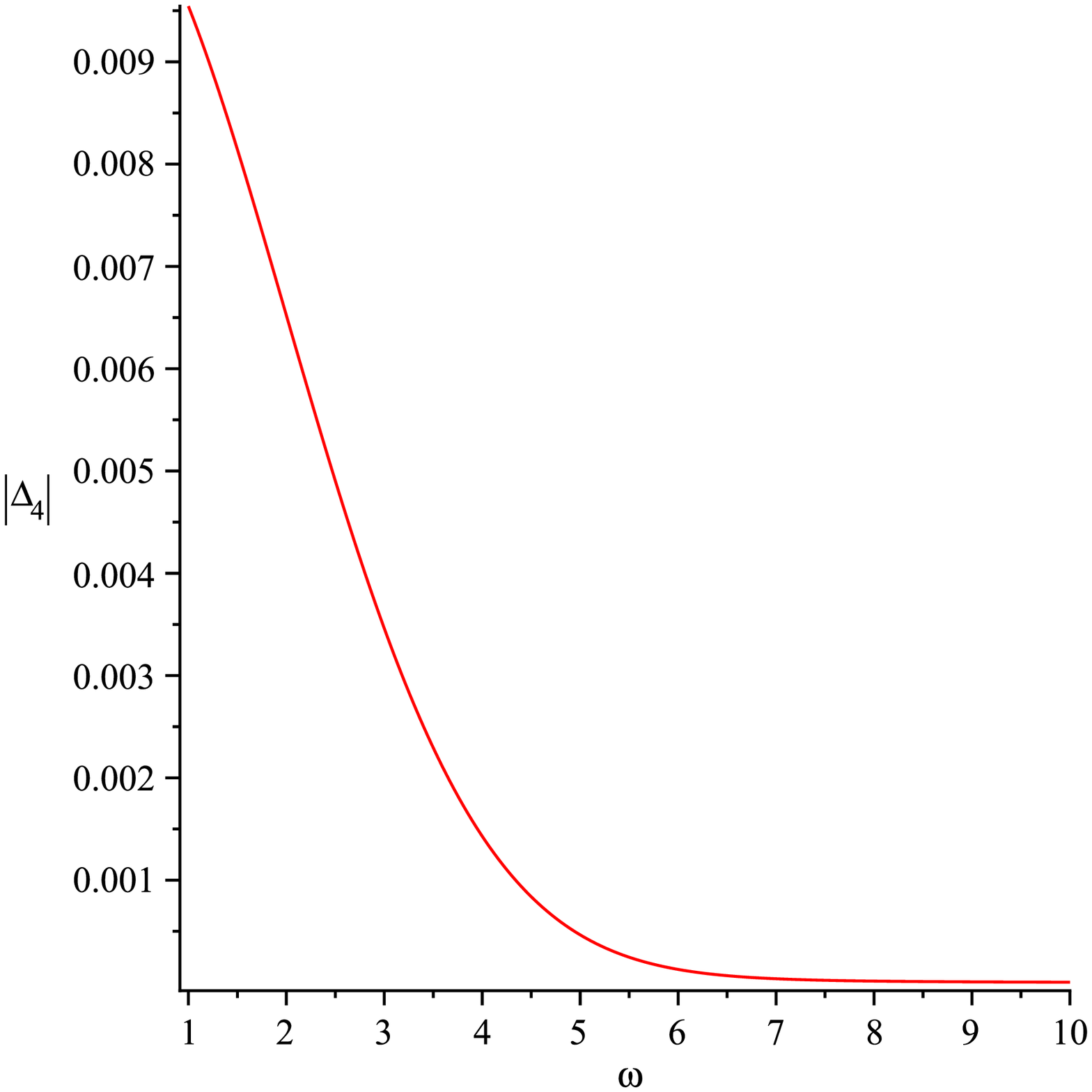}}}\vspace{1ex}
\centerline{\fbox{\includegraphics[width=0.3\textwidth,height=12ex]{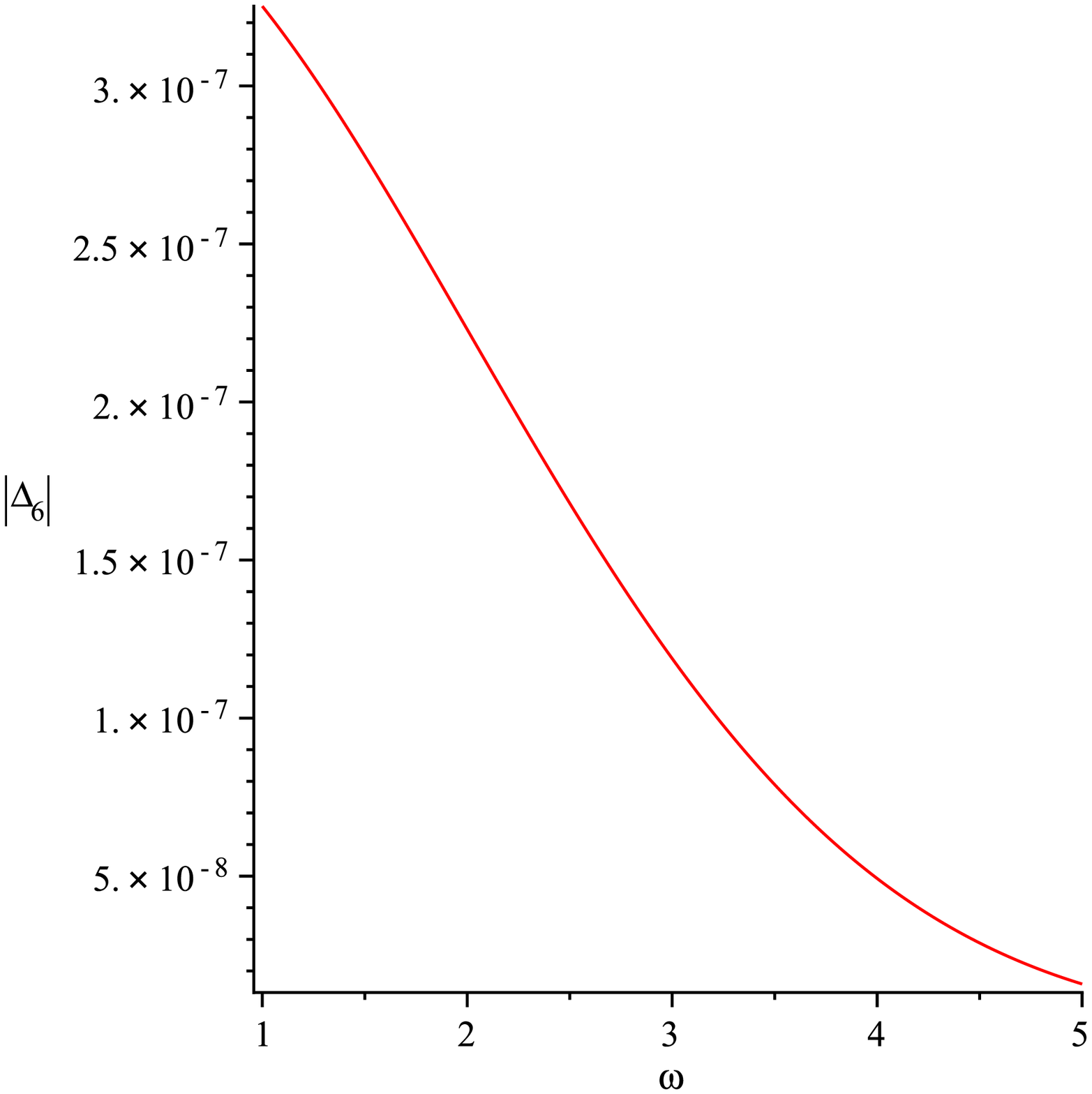}\quad\includegraphics[width=0.3\textwidth,height=12ex]{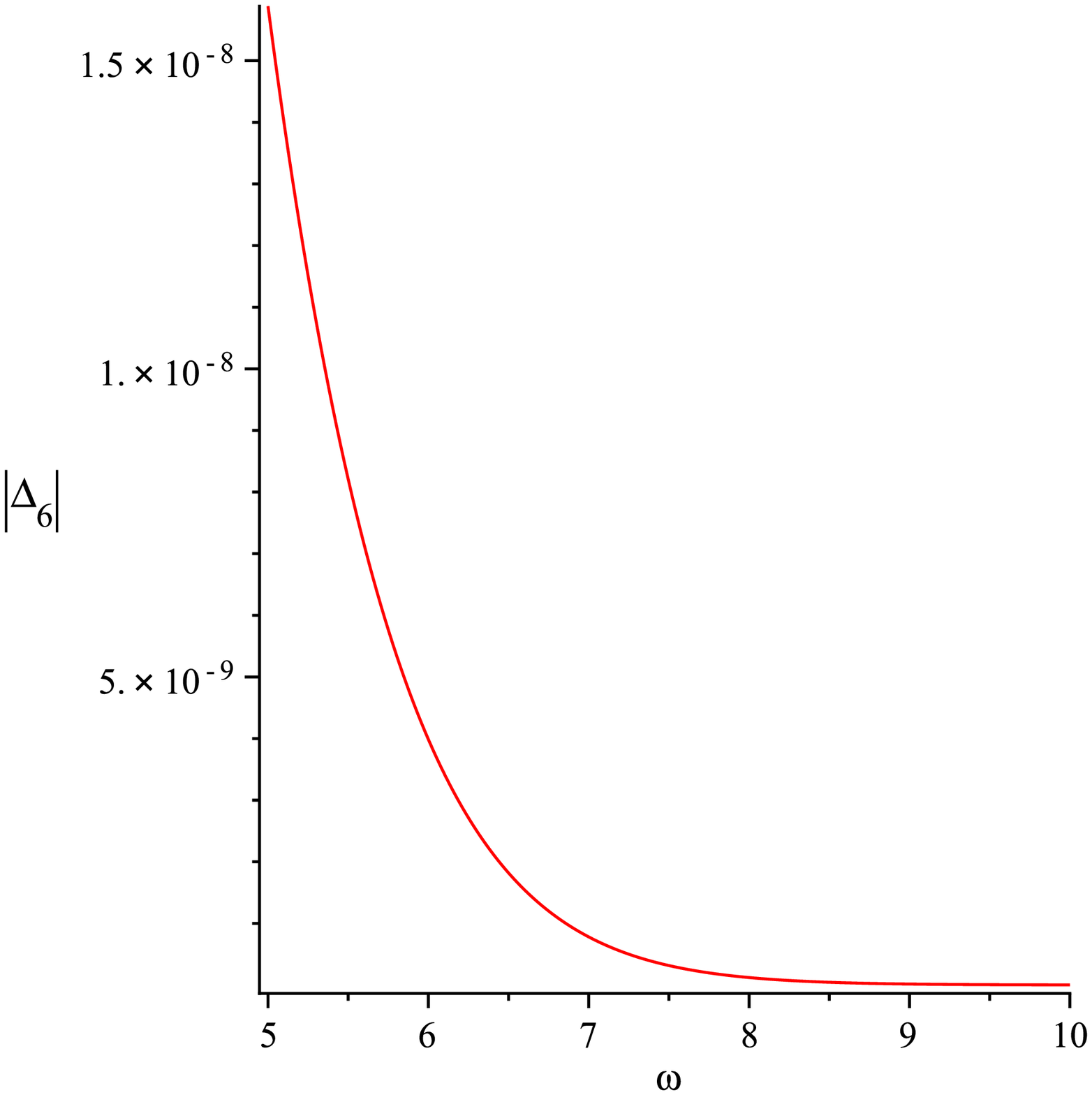}}}
\caption{\small Absolute values of the Hankel determinants $\Delta_n$ for n=2,4,6.\label{fig:hdeven}}
\end{figure}

\begin{lemma}\label{lem:contus}
For a given integer $n>0$, assume that the orthogonal polynomial $p_n^{\omega}(x)$ exists for all $\omega>0$. Then all coefficients of $p_n^{\omega}(x)$ as functions of $\omega$ are continuous.
\end{lemma}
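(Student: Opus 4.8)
The plan is to realize the coefficients of $p_n^{\omega}$ as the solution of a linear system governed by the moments, and then to invoke Cramer's rule together with the continuity of those moments. First I would write the monic polynomial as $p_n^{\omega}(x)=x^n+\sum_{k=0}^{n-1}c_k(\omega)\,x^k$ and substitute it into the orthogonality conditions $\int_{-1}^1 p_n^{\omega}(x)\,x^j e^{\im\omega x}\,\df x=0$ for $j=0,1,\ldots,n-1$. Expanding and using $\mu_k=\int_{-1}^1 x^k e^{\im\omega x}\,\df x$, this collapses to the linear system $\sum_{k=0}^{n-1}\mu_{k+j}(\omega)\,c_k(\omega)=-\mu_{n+j}(\omega)$, $j=0,1,\ldots,n-1$, whose coefficient matrix is exactly the Hankel matrix $(\mu_{k+j})_{0\le j,k\le n-1}$ associated with $\Delta_n$.

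Next I would use the standing hypothesis. The assumption that $p_n^{\omega}$ exists for every $\omega>0$ is precisely the statement that $\Delta_n(\omega)\ne0$ throughout $(0,\infty)$. Hence the linear system has a unique solution given by Cramer's rule, $c_k(\omega)=\det M_k(\omega)/\Delta_n(\omega)$, where $M_k(\omega)$ is obtained from the Hankel matrix by replacing the column multiplying $c_k$ with the vector whose $j$-th entry is $-\mu_{n+j}(\omega)$.

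The continuity then follows from the regularity of the moments. As already noted in the proof of the preceding theorem, each $\mu_k(\omega)$ is analytic, and in particular continuous, on the semi-axis $\omega>0$. Consequently both $\det M_k(\omega)$ and $\Delta_n(\omega)$, being polynomial expressions in these moments, are continuous; and since the denominator never vanishes by hypothesis, each quotient $c_k(\omega)$ is continuous on $(0,\infty)$, which is the assertion of the lemma.

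I do not anticipate a serious obstacle. The single point requiring care is to confirm that the existence hypothesis is equivalent to the non-vanishing of $\Delta_n$, since this is what simultaneously guarantees that Cramer's rule applies at every $\omega$ and that the denominator in the quotient stays bounded away from zero on any compact subinterval; the remainder is a routine consequence of the continuity of the moments established earlier.
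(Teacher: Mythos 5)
Your proposal is correct and follows essentially the same route as the paper: both set up the linear system for the coefficients from the orthogonality conditions with the Hankel matrix of moments, solve it by inverting that matrix (your Cramer's rule quotients $\det M_k/\Delta_n$ are the same formula as the paper's adjugate expression $u_n=-\frac{1}{\Delta_n}[\mathsf{V}_n]^{\mathsf{T}}v_n$), and conclude continuity from the continuity of the moments together with the hypothesis that $\Delta_n(\omega)\ne0$ for all $\omega>0$.
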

\begin{proof} 
If $p_n^{\omega}(x)=x^n+\sum_{k=0}^{n-1}a_k(\omega)x^k$, then the coefficients $a_0(\omega),\ldots,a_{n-1}(\omega)$ satisfy the linear system
\begin{equation}
\left[v_0(\omega),\ldots,v_{n-1}(\omega)\right] u_n(\omega)+v_n(\omega)=0,
\end{equation}
where
$$
v_k(\omega)=\left[\mu_k,\mu_{k+1},\ldots,\mu_{k+n-1}\right]^\textsf{T},\quad u_n(\omega)=\left[a_0(\omega),\ldots,a_{n-1}(\omega)\right]^\textsf{T}.
$$
Then
\begin{equation}\label{eq:u}
u_n(\omega)=-\frac{1}{\Delta_n}\left[\textsf{V}_n(\omega)\right]^\textsf{T}v_n(\omega),
\end{equation}
where $\textsf{V}_n(\omega)$ is the cofactor matrix of $\left[v_0(\omega),\ldots,v_{n-1}(\omega)\right]$. All entries of the matrix $\textsf{V}_n(\omega)$ are continuous with respect to $\omega$ due to the continuity of the moments $\mu_k$, the entries of $\left[v_0(\omega),\ldots,v_{n-1}(\omega)\right]$. Since the denumerator $\Delta_n$ does not vanishes for any $\omega>0$, the result follows from (\ref{eq:u}). 
\end{proof}
\begin{theorem}
For a given integer $n>0$, assume that the orthogonal polynomial $p_n^{\omega}(x)$ exists for all $\omega>0$. If $\omega>0$ is small enough or large enough, then all of the roots of the orthogonal polynomial $p_n^{\omega}(x)$ are simple.
\end{theorem}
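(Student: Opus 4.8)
The plan is to reduce the statement to the non-vanishing of the discriminant of $p_n^\omega$ and then to analyse the two regimes $\omega\to0^+$ and $\omega\to\infty$ separately. First I would record that, under the standing assumption $\Delta_n(\omega)\ne0$ for $\omega>0$ together with $\Delta_n(0)\ne0$ (established in the proof of the preceding theorem), the moments are entire in $\omega$, so the representation (\ref{eq:u}) exhibits each coefficient $a_k(\omega)$ as a ratio of functions analytic on a neighbourhood of $[0,\infty)$ with non-vanishing denominator; hence the $a_k$ extend continuously to $\omega=0$ and are in fact analytic there, refining Lemma~\ref{lem:contus}. Consequently the discriminant $D(\omega):=\operatorname{disc}_x p_n^\omega(x)$, being a fixed polynomial in $a_0(\omega),\dots,a_{n-1}(\omega)$, is continuous on $[0,\infty)$, and the zeros of $p_n^\omega$ are all simple precisely when $D(\omega)\ne0$. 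It therefore suffices to prove $D(\omega)\ne0$ for $\omega$ near $0$ and for $\omega$ large.

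For the small-$\omega$ regime I would pass to the limit $\omega\to0^+$. Since $\mu_k(0)=\int_{-1}^1 x^k\,\df x$ equals $2/(k+1)$ for even $k$ and $0$ for odd $k$, these are the moments of the Lebesgue measure on $[-1,1]$, so $p_n^0$ is the monic Legendre polynomial of degree $n$, whose $n$ zeros are real, lie in $(-1,1)$ and are simple; thus $D(0)\ne0$. By continuity of $D$ there is $\delta>0$ with $D(\omega)\ne0$ for $0<\omega<\delta$, which settles ``small enough''.

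The large-$\omega$ regime is the substantive part. Here I would set $s=1/\omega$ and develop the asymptotics of the moments from the recursion (\ref{eq:muk}), equivalently from the exponential generating identity $\sum_k\mu_k t^k/k!=2\sinh(t+\im\omega)/(t+\im\omega)$. The leading term of every moment is $\mu_k\sim\bigl(e^{\im\omega}-(-1)^k e^{-\im\omega}\bigr)/(\im\omega)$, which depends on $k$ only through its parity; consequently the leading part of the Hankel matrix $[\mu_{i+j}]$ has rank at most two, and $\Delta_n$ is far smaller than the naive $O(\omega^{-n})$. I would therefore carry the expansion to the order needed to resolve this degeneracy and track the two endpoint contributions separately, using the endpoint expansion $\int_{-1}^1 g(x)e^{\im\omega x}\,\df x\sim\sum_m(-1)^m(\im\omega)^{-m-1}\bigl[g^{(m)}(1)e^{\im\omega}-g^{(m)}(-1)e^{-\im\omega}\bigr]$. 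The expected picture, which I would make precise, is that the $n$ zeros of $p_n^\omega$ split into two clusters approaching $x=1$ and $x=-1$ at rate $1/\omega$: writing the zeros near $\pm1$ as $\pm1\mp\xi/\omega$ and rescaling, the local configurations converge to the zeros of fixed limiting (Laguerre-type) polynomials arising from these endpoint expansions. Since those limiting polynomials have simple zeros and the two clusters sit near the distinct points $+1$ and $-1$, all $n$ zeros of $p_n^\omega$ are mutually distinct once $\omega$ is large; equivalently a suitably normalised $D(\omega)$ converges to a non-zero value, so $D(\omega)\ne0$ for all large $\omega$.

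The main obstacle is precisely this large-$\omega$ analysis, and the difficulty is threefold. The leading Hankel matrix collapses to rank two, so one must compute several subleading orders of $\mu_k$ merely to obtain the true size and leading coefficient of $\Delta_n$. The oscillatory factors $e^{\pm\im\omega}$ prevent the coefficients $a_k(\omega)$ from converging outright, so the argument must be run on the scaled data (or directly on a normalised discriminant), where the common oscillation cancels. Finally one must verify that the rescaled endpoint limits are genuinely non-degenerate polynomials with simple zeros, and that the $+1$- and $-1$-clusters neither merge nor produce coincident zeros. Establishing that the normalised $\Delta_n$ stays bounded away from zero as $\omega\to\infty$ --- thereby ruling out an accumulation of zeros of $D$ at infinity --- is the crux; the small-$\omega$ end and the reduction to $D$ are routine by comparison.
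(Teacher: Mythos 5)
Your reduction to the discriminant and your small-$\omega$ argument are sound, and the latter is essentially the paper's own: the paper combines Lemma~\ref{lem:contus} with the continuity of roots as functions of the coefficients and the fact that at $\omega=0$ one has the Legendre polynomial, whose $n$ roots are distinct; your variant (continuity of $D(\omega):=\operatorname{disc}_x p_n^\omega$ up to $\omega=0$ together with $D(0)\ne0$) is the same argument in equivalent form. So the ``small enough'' half of the theorem is genuinely established by your proposal.

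The gap is the ``large enough'' half, which you yourself label the crux and never close. What you offer there is a program, not a proof: the endpoint expansion of the moments, the rank-two degeneracy of the leading Hankel matrix, the splitting of the zeros into two clusters at $\pm1$ at scale $1/\omega$, the convergence of the rescaled clusters to Laguerre-type polynomials with simple zeros, and the non-vanishing of a normalised discriminant are all announced (``which I would make precise'', ``the expected picture'', ``one must verify'') but none is carried out, and these steps are not routine --- they constitute a genuine asymptotic analysis of $p_n^\omega$ as $\omega\to\infty$. The paper does not perform this analysis either; instead it disposes of the large-$\omega$ regime by citing~\cite{ash12} for the distinctness of the roots in the limit $\omega\to\infty$, and then applies the same continuity argument. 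In fact your rescaled formulation is, if anything, more careful than the paper's phrasing: since the zeros cluster at the two endpoints, their literal limits coincide for $n>2$, and it is precisely the $1/\omega$-scale separation (your normalised discriminant) that must be shown to persist --- which is the content of the asymptotic node results in~\cite{ash12}. So your outline correctly identifies the missing ingredient, and it is essentially the cited result; but a proof must either establish it or invoke it, and yours does neither, leaving the ``large enough'' claim unproven.
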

\begin{proof}
It is well-known that the roots of a polynomial vary continuously as the coefficients of the polynomial change continuously. Thus, Lemma~\ref{lem:contus} implies that the trajectories of the roots of $p_n^{\omega}(x)$, as $\omega>0$ increases, are all continuous. Since the roots corresponding to $\omega=0$ as well as $\omega\to\infty$ are all distinct~\cite{ash12}, then the result follows.
\end{proof}
\section{Concluding remarks}
We have shown that the orthogonal polynomial $p^{\omega}_n$, corresponding to the oscillatory weight $e^{\im\omega x}$, exists if $\omega$ is a transcendental number and $\tan\omega/\omega\in\Q$. The set of such $\omega$ is nonempty since it contains the multipliers of $\pi$. Determining other members is not an easy task, so the main problem is still unsolved: For which values of $\omega$ does $p^{\omega}_n$ exist?
We have also shown that $p^{\omega}_n$ exist for almost every $\omega$.

In order to arrive at an $n$-point Gaussian quadrature rule of standard form, it is necessary that all the roots of $p^{\omega}_n$ (if exists) to be simple. The simplicity of the roots of $p^{\omega}_n$ is established only when $\omega>0$ is small enough or when it is large enough. The problem is unsolved for an arbitrary $\omega>0$. We believe that the more properties of $p^{\omega}_n$ one knows, the higher chance he has to solve the problem. For instance, the symmetricity of $p^{\omega}_n$ (cf.~\cite{ash12}) implies that the coefficients of $p^{\omega}_n(z)$ (starting from 1, the coefficient of $z^n$) are real and pure imaginary, alternatively. Also form the three-term recurrence relation,
\begin{equation}\label{eq:3term}
p^{\omega}_k(z)=(z-\alpha_{k-1})p^{\omega}_{k-1}(z)-\beta_{k-1}p^{\omega}_{k-2}(z),
\end{equation}
and Theorem~3.3 of~\cite{ash12}, it is easy to show that $\alpha_k$ and $\alpha'_k$ are pure imaginary numbers; $\beta_k$ and $\beta'_k$ are real. Here the prime sign indicates the derivative with respect to $\omega$.
\bibliographystyle{plain}
\bibliography{majidian}
\end{document}